\newtheorem{theorem}{Theorem}
\newtheorem{corollary}[theorem]{Corollary}
\newtheorem{definition}[theorem]{Definition}
\newtheorem{proposition}[theorem]{Proposition}
\newtheorem{remark}[theorem]{Remark}
\newenvironment{proof}[1][Proof]{\noindent \textbf{#1.} }{\  \rule{0.5em}{0.5em}}
\begin{document}

\begin{frontmatter}
	
	\title{Commutativity and spectral properties for a general class of 
	Sz\'asz-Mirakjan-Durrmeyer operators}

	\author[1]{Ulrich Abel} 
	\author[2]{Ana Maria Acu}
	\author[3]{Margareta Heilmann}
	\author[4]{Ioan Ra\c sa}

	\vspace{10cm}

	\address[1] {Fachbereich MND
		Technische Hochschule Mittelhessen, Germany, 
		e-mail: ulrich.abel@mnd.thm.de }
	\address[2]{Department of Mathematics and Informatics, Lucian Blaga University of Sibiu,  Romania ({\it Corresponding Author}), e-mail: anamaria.acu@ulbsibiu.ro}
	\address[3]{School of Mathematics and Natural Sciences, University of Wuppertal,  Germany, e-mail: heilmann@math.uni-wuppertal.de}
	\address[4]{Technical University of Cluj-Napoca, Faculty of Automation and Computer Science, Department of Mathematics, Str. Memorandumului nr. 28, 400114 Cluj-Napoca, Romania
		e-mail:  ioan.rasa@math.utcluj.ro }
	
	\begin{abstract} 	
		{In this paper we present commutativity results for a general class of Sz\'asz-Mirakjan-Durrmeyer type operators and associated differential operators and investigate their eigenfunctions.
		} 
	\end{abstract}
	
	\begin{keyword} 
	Positive linear operators, Sz\'asz-Mirakjan-Durrmeyer type operators, commutativity, differential operators, spectral properties
		
		\MSC[2020]  41A36.
	\end{keyword}
	
\end{frontmatter}

\fbox{\fbox{{\large \jobname.tex}}} \hskip1cm ~ {\large \today}


\section{Introduction}

For fixed $j \in \mathbb{Z}$ we consider sequences of positive linear
operators $S_{n,j}$ which can be viewed as a generalization of the
Sz\'asz-Mirakjan-Durrmeyer operators \cite{MaTo1985}, Phillips operators 
\cite{Ph1954} and corresponding Kantorovich modifications of higher order.

For $j\in {\mathbb{N}}$, these operators possess the exceptional
property to preserve constants and the monomial $x^{j}$.

It turned out (see \cite{Snj2}), that an extension of this family covers certain well-known
operators studied before, so that the outcoming results could be unified.
Continuous functions  of exponential growth on the positive half-axis are approximated  
by the operators $S_{n,j}$, $j \in \mathbb{Z}$   in each point of
continuity (see \cite[Corollary 12]{Snj2}.

In this paper we investigate commutativity properties of the operators and their
commutativity with certain differential operators as well as their spectral properties.

For $A\geq 0$, we denote by $E_{A}$ the space of functions $f:[0,\infty
)\longrightarrow \mathbb{R}$, $f$ locally integrable, satisfying the growth
condition $\left\vert f\left( t\right) \right\vert \leq Ke^{At}$, $t\geq 0$
for some positive constant $K$. Furthermore, define $E:=\displaystyle%
\bigcup_{A\geq 0}E_{A}$.

Throughout the paper we use the convention that a sum is zero if the upper
limit is smaller than the lower one.


\begin{definition}
Let $f \in E_A$, $n>A$ and $j\in{\mathbb{Z}}$. Then the operators $S_{n,j}$
are defined by 
\begin{equation*}
(S_{n,j}f)(x)=f(0)\sum_{k=0}^{j-1}s_{n,k}(x)+\sum_{k=j}^{\infty }s_{n,k}(x) n
\int_{0}^{\infty }s_{n,k-j}(t)f(t)dt
\end{equation*}
where 
\begin{equation*}
s_{n,k}(x)=\dfrac{(nx)^k}{k!}e^{-nx}, \, k\geq 0.
\end{equation*}
For the sake of simplicity we define $s_{n,k}(x)=0$, if $k<0$.
\end{definition}

For $j \leq 0$ we have 
\begin{equation*}
(S_{n,j}f)(x)=\sum_{k=0}^{\infty }s_{n,k}(x) n\int_{0}^{\infty
}s_{n,k-j}(t)f(t)dt.
\end{equation*}
For the special case $j=0$ we get the Sz\'asz-Mirakjan-Durrmeyer operators
first defined in \cite{MaTo1985}, for $j=1$ the Phillips operators, also
called genuine Sz\'asz-Mirakjan-Durrmeyer operators, \cite{Ph1954}. \ For $j
\leq -1$ the operators coincide with certain auxiliary operators, see, e.g., 
\cite[(1.4)]{HeiMue1989} for $c=0$, named $M_{n,r}$ there, $r=-j$ or \cite[%
(3.5)]{He1992} for $c=0$, named $M_{n,s}$ there, $s=-j$. They can also be
considered as corresponding Kantorovich modifications of higher order. 

In \cite[Lemma 2]{Snj2} it was proved that $S_{n,j}f\in E_{2A}$ for $n>2A$, $A\geq 0$, $f\in E_A$.

For $l=0,1,\ldots$ we denote $e_l(t)=t^l$, $t\geq0$.

All these operators preserve $e_0$ and for $j\geq 1$ also $e_j$ since 
\begin{eqnarray*}
(S_{n,j} e_j)(x) & = & \sum_{k=j}^{\infty} s_{n,k} (x){n \int_0^\infty
s_{n,k-j} (t) t^j dt} \\
& = & x^j \sum_{k=j}^{\infty} s_{n,k-j} (x) = x^j.
\end{eqnarray*}
Furthermore, for $j \geq 1$ it is easy to see that $S_{n,j}(f;0)=f(0)$.

Throughout the paper we use the following formulas 
\begin{eqnarray}
\label{eq-der1}
	s_{n,k}^{\prime }(x) 
	& = & 
	n (s_{n,k-1}(x) -s_{n,k}(x) ),   
\\
\label{eq-int1}
	\int_0^\infty s_{n,l} (t) t^r dt 
	& = & 
	\frac{(l+r)!}{l!} n^{-r-1},
\\
\label{eq-int2}
	\int_0^\infty s_{m,r} (t) s_{n,l} (t) dt 
	& = & 
	\frac{m^r n^l}{(m+n)^{r+l+1}}
	\frac{(r+l)!}{r!l!}.
\end{eqnarray}

\section{Commutativity of the operators}

In this section our aim is to show that the operators commute for a fixed $j$%
. For $j=0$ this was proved in \cite{He1988}. Indeed, here we prove a more
general result for compositions of the operators from which the
commutativity then follows as a corollary as well as a nice representation
for iterates of $S_{n,j}$. For $j=0$ the following result was proved in \cite%
{AbIv2005}, for $j=1$ see \cite[Theorem 3.1]{Heilmann-Tachev-Philipps-2011}
and for $j\leq 0$ see \cite[Theorem 1]{Sw1}.

Due to \cite[Remark 3]{Snj2} we obtain
\begin{equation*}
\left\vert S_{m,j}\left( S_{n,j}f\right) \left( x\right) \right\vert \leq
1+\left( \frac{n-A}{n}\right)^{j-1}\left( \frac{m-\frac{An}{n-A}}{m}\right)
^{j-1}\exp \left( \frac{Am}{m-\frac{An}{n-A}}x\right) 
\end{equation*}
for each integer $j$, $n>A$ and $m > \frac{An}{n-A}$.

For $n>A$ the inequality $m > \frac{An}{n-A}$ is equivalent to 
$ \frac{mn}{m+n}>A$.

\begin{theorem}
\label{th-composite} Let $f\in E_A$, $n > A$ and  $m > \frac{An}{n-A}$. Then 
\begin{equation}
\label{comp1}
S_{m,j}(S_{n,j}f)=S_{\frac{mn}{m+n},j}f.
\end{equation}
\end{theorem}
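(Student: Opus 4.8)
The plan is to substitute the definition of $S_{n,j}f$ into $S_{m,j}$, to evaluate the inner integrals by the product formula \eqref{eq-int2}, and to reassemble the resulting double series as $S_{p,j}f$ with $p:=\frac{mn}{m+n}$. As noted just before the statement, $n>A$ and $m>\frac{An}{n-A}$ give $\frac{mn}{m+n}=p>A$, so all three operators act on $E_A$ and, by the estimate quoted from \cite[Remark 3]{Snj2}, the composition converges absolutely; this legitimizes the interchanges of summation and integration used throughout.

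I would isolate the integral part of each operator by its kernel $K_{n,j}(x,u):=n\sum_{k\ge j}s_{n,k}(x)\,s_{n,k-j}(u)$, so that $\int_0^\infty K_{n,j}(x,u)f(u)\,du$ is the $k\ge j$ portion of $S_{n,j}f$ (for $j\le 0$ this is all of $S_{n,j}f$). By Fubini the integral part of the composition then has kernel $\int_0^\infty K_{m,j}(x,t)K_{n,j}(t,u)\,dt$, and the crux of the proof is the kernel identity
\begin{equation*}
\int_0^\infty K_{m,j}(x,t)\,K_{n,j}(t,u)\,dt=K_{p,j}(x,u).
\end{equation*}
Expanding the left-hand side and applying \eqref{eq-int2} to $\int_0^\infty s_{m,k-j}(t)s_{n,l}(t)\,dt$ turns it into a double series in $s_{m,k}(x)$ and $s_{n,l-j}(u)$ with explicit factorial coefficients. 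Writing both sides in the form $p^{j+1}x^j\,e^{-px}e^{-pu}\cdot(\text{power series in }x,u)$, which on the left requires absorbing the discrepancy $e^{(m-p)x}e^{(n-p)u}$, and comparing coefficients after the substitution $a=k-j$, $b=l-j$, the equality collapses to the summation
\begin{equation*}
\frac{(A+B+j)!}{(A+j)!\,(B+j)!}=\sum_{c=0}^{\min(A,B)}\binom{A}{c}\binom{B}{c}\frac{c!}{(c+j)!}.
\end{equation*}
This is exactly the Chu--Vandermonde identity: its right-hand side equals $\frac{1}{j!}\,{}_2F_1(-A,-B;j+1;1)$, which the terminating Gauss summation evaluates to $\frac{(A+B+j)!}{(A+j)!\,(B+j)!}$. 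Establishing this reduction and keeping the bookkeeping clean is the step I expect to be the main obstacle.

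For $j\le 0$ there are no further terms and the kernel identity already yields \eqref{comp1}. For $j\ge 1$ one must still account for the contributions of $f(0)$. Using $(S_{n,j}f)(0)=f(0)$ and substituting the $f(0)\sum_{l=0}^{j-1}s_{n,l}$ part of $S_{n,j}f$ into $S_{m,j}$, the coefficient of $f(0)$ becomes
\begin{equation*}
\sum_{k=0}^{j-1}s_{m,k}(x)+\int_0^\infty K_{m,j}(x,t)\sum_{l=0}^{j-1}s_{n,l}(t)\,dt,
\end{equation*}
and one checks, by the same use of \eqref{eq-int2} and an analogous (simpler) Vandermonde computation, that this equals $\sum_{k=0}^{j-1}s_{p,k}(x)$; for $j=1$, for instance, it reduces to $e^{-mx}\exp\!\big(\tfrac{m^2x}{m+n}\big)=e^{-px}$. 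Since every $S_{n,j}$ preserves $e_0$, the relation $S_{m,j}S_{n,j}e_0=e_0=S_{p,j}e_0$ provides a convenient consistency check on this boundary computation. Combining the integral and boundary parts gives \eqref{comp1}.
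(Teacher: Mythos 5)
Your proposal is correct and its skeleton coincides with the paper's: both split $S_{n,j}$ into the $f(0)\sum_{k<j}s_{n,k}$ part and the integral part, reduce the integral-part composition to a kernel identity via \eqref{eq-int2}, and treat the $f(0)$ contributions separately. The difference lies in how the kernel identity is closed. The paper resums the double series: it evaluates the inner sum over $l$ by a Rodrigues-type formula, $\sum_{l\ge 0}\frac{(l+k+j)!}{(l+j)!}\frac{z^l}{l!}=z^{-j}\bigl(\frac{d}{dz}\bigr)^k(z^{k+j}e^z)=e^z\sum_{\nu=0}^{k}\binom{k}{\nu}\frac{(k+j)!}{(\nu+j)!}z^\nu$, then interchanges the order of summation and sums an exponential series. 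You instead compare coefficients of $x^{A+j}u^{B}$ after factoring out $e^{-px}e^{-pu}$, which reduces the identity to $\frac{(A+B+j)!}{(A+j)!(B+j)!}=\sum_{c}\binom{A}{c}\binom{B}{c}\frac{c!}{(c+j)!}$; I checked that this is indeed what the comparison yields and that the terminating Gauss/Chu--Vandermonde summation gives exactly the right value. The two routes are two faces of the same Vandermonde identity (the paper's derivative formula is its generating-function form), so this is a reorganization rather than a new idea, but yours is arguably the more transparent bookkeeping. Two remarks on the boundary part, which you only sketch. First, calling it ``simpler'' undersells it: in the paper this is the most laborious piece (a Leibniz-formula evaluation of $\sum_{k\ge j}\frac{z^k}{k!}\frac{(k+l-j)!}{(k-j)!}$ involving the truncated exponential, plus a separate binomial identity). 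Second, and more interestingly, your ``consistency check'' via $e_0$ actually proves the boundary identity outright and makes that whole computation unnecessary: since $S_{m,j}S_{n,j}e_0=e_0=S_{p,j}e_0$ and all terms are nonnegative, applying the already-established kernel identity to the $f\equiv e_0$ decomposition and subtracting gives $\sum_{k<j}s_{m,k}(x)+\int_0^\infty K_{m,j}(x,t)\sum_{l<j}s_{n,l}(t)\,dt=\sum_{k<j}s_{p,k}(x)$ directly. Promoting that observation from a check to the argument would make your proof complete and strictly shorter than the paper's.
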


\begin{proof}
Due to the preceding remarks we only have to prove the theorem for $j \geq 2$.
The considerations above show that both sides of (\ref{comp1}) are well defined.
According to \cite[Lemma 2]{Snj2}, $S_{m,j}$ can be applied
to $S_{n,j}f$.

Now we can write 
\begin{eqnarray*}
\lefteqn{(S_{\frac{mn}{m+n},j}f)(x)} \\
&=&f(0)\sum_{k=0}^{j-1}s_{\frac{mn}{m+n},k}(x)+\sum_{k=j}^{\infty }s_{\frac{%
mn}{m+n},k}(x)\cdot \frac{nm}{n+m}\cdot \int_{0}^{\infty }s_{\frac{mn}{m+n}%
,k-j}(t)f(t)dt \\
&=: &T_{1}+T_{2}
\end{eqnarray*}%
\begin{eqnarray*}
(S_{n,j}f)(x) &=&f(0)\sum_{k=0}^{j-1}s_{n,k}(x)+\sum_{k=j}^{\infty
}s_{n,k}(x)\cdot n\cdot \int_{0}^{\infty }s_{n,k-j}(t)f(t)dt \\
&=: &(S_{n,j,1}f)(x)+(S_{n,j,2}f)(x)
\end{eqnarray*}%
\begin{eqnarray*}
(S_{m,j}(S_{n,j}f)(y))(x)
&=&(S_{m,j,1}(S_{n,j,1}f)(y))(x)+(S_{m,j,2}(S_{n,j,1}f)(y))(x) \\
&&+(S_{m,j,1}(S_{n,j,2}f)(y))(x)+(S_{m,j,2}(S_{n,j,2}f)(y))(x) \\
&=: &T_{3}+T_{4}+T_{5}+T_{6} .
\end{eqnarray*}
It is easy to verify that $T_5 =0$. We will now prove that $T_6=T_2$ and $%
T_3+T_4=T_1$. 

With (\ref{eq-int2}) we get
\begin{eqnarray}  \label{T6}
T_{6} &=&\sum_{k=j}^{\infty }s_{m,k}(x) m \int_{0}^{\infty }s_{m,k-j}(t) 
\left[ \sum_{l=j}^{\infty }s_{n,l}(t) n \int_{0}^{\infty }s_{n,l-j}(y)f(y)dy%
\right] dt  \notag \\
&=&mn\int_{0}^{\infty }\left\{ f(y)\sum_{k=j}^{\infty }\sum_{l=j}^{\infty
}s_{m,k}(x)s_{n,l-j}(y){\int_{0}^{\infty }s_{m,k-j}(t)s_{n,l}(t)dt}\right\}
dy  \notag \\
&=: &\int_{0}^{\infty }f(y)T_{m,n}(x,y)dy .
\end{eqnarray}%
\begin{eqnarray*}
T_{m,n}(x,y) &=& mn\sum_{k=0}^{\infty }
\sum_{l=0}^{\infty}s_{m,k+j}(x)s_{n,l}(y)\frac{m^{k}n^{l+j}}{(m+n)^{k+l+1+j}}
\cdot \frac{(k+l+j)!}{k!(l+j)!} \\
&=&\frac{mn}{m+n}e^{-mx}e^{-ny} \sum_{k=0}^{\infty }\frac{m^{2k+j}x^{k+j}}{%
(m+n)^{k+j}}\frac{1}{k!(k+j)!} T(y) ,
\end{eqnarray*}
where
\begin{eqnarray*}
	T(y)
	& :=& 
	\sum_{l=0}^{\infty }\frac{n^{2l+j}y^{l}}{(m+n)^{l}} \frac{ (k+l+j)!}{l!(l+j)!} 
\\
	& = &
	n^{j}\sum_{l=0}^{\infty }\frac{(\frac{n^{2}}{m+n}y)^{l}}{l!(l+j)!}\cdot (k+l+j)! .
\end{eqnarray*}
Using the relation 
\begin{eqnarray*}
\sum_{l=0}^{\infty }\frac{(l+k+j)!}{(l+j)!}\frac{z^{l}}{l!} &=& z^{-j}\left( 
\frac{d}{dz}\right) ^{k}\sum_{l=0}^{\infty }\frac{z^{l+k+j}}{l!} \\
&=& z^{-j}\left( \frac{d}{dz}\right) ^{k}\left( z^{k+j}e^{z}\right) \\
&=& e^{z}\sum_{\nu =0}^{k}{\binom{k}{\nu }}\frac{(k+j)!}{(\nu +j)!}z^{\nu }
\end{eqnarray*}
with $z=\frac{n^{2}}{m+n}y$ we obtain 
\begin{equation*}
T(y)=n^{j}e^{\frac{n^{2}}{m+n}y}\sum_{\nu =0}^{k}{\binom{k}{%
\nu }}\frac{(k+j)!}{(\nu +j)!}\left( \frac{n^{2}}{m+n}y\right) ^{\nu }.
\end{equation*}%
Inserting this in $T_{m,n}(x,y)$ above we get 
\begin{align*}
	\lefteqn{T_{m,n}(x,y)} 
\\
	&=
	\frac{n^{j+1}m}{m+n}e^{-mx}e^{-ny}e^{\frac{n^{2}}{m+n}y}
	\sum_{k=0}^{\infty} \frac{m^{2k+j}x^{k+j}}{(m+n)^{k+j}}
	\frac{1}{k!(k+j)!}\sum_{\nu =0}^{k}{\binom{k}{\nu }}
	\frac{(k+j)!}{(\nu +j)!}\left( \frac{n^{2}}{m+n}y\right) ^{\nu } 
\\
	&=
	\frac{n^{j+1}m}{m+n}e^{-mx}e^{-ny}e^{\frac{n^{2}}{m+n}y}
	\sum_{\nu=0}^{\infty }\frac{1}{\nu !(\nu +j)!}
	\left( \frac{n^{2}}{m+n}y\right)^{\nu}
	\sum_{k=\nu }^{\infty } \frac{1}{(k-\nu )!} \frac{m^{2k+j}x^{k+j}}{(m+n)^{k+j}}
\end{align*}%
Now we calculate 
\begin{eqnarray*}
	\sum_{k=\nu }^{\infty } \frac{1}{(k-\nu )!} 
	\frac{m^{2k+j}x^{k+j}}{(m+n)^{k+j}}
	&=&
	\frac{m^{2\nu +j}x^{\nu +j}}{(m+n)^{\nu +j}}
	\sum_{k=0}^{\infty }\frac{\left( \frac{m^{2}x}{m+n}\right) ^{k}}{k!} 
\\
	&=&
	\frac{m^{2\nu +j}x^{\nu +j}}{(m+n)^{\nu +j}}e^{\frac{m^{2}}{m+n}x}
\end{eqnarray*}%
which gives 
\begin{eqnarray*}
T_{m,n}(x,y) &=&\frac{nm}{m+n}e^{-\frac{mn}{m+n}x}e^{-\frac{mn}{m+n}%
y}\sum_{\nu =0}^{\infty }\frac{\left( \frac{mn}{m+n}x\right) ^{\nu +j}}{(\nu
+j)!}\frac{\left( \frac{mn}{m+n}y\right) ^{\nu }}{\nu !} \\
&=&\frac{mn}{m+n}\sum_{\nu =0}^{\infty }s_{\frac{mn}{n+m},\nu +j}(x)s_{\frac{%
mn}{n+m},\nu }(y).
\end{eqnarray*}%
Thus with (\ref{T6}) 
\begin{equation*}
T_{6}=T_{2}
\end{equation*}%
and it remains to prove that 
\begin{equation*}
T_{3}+T_{4}=T_{1} .
\end{equation*}%
We have 
\begin{eqnarray*}
T_{3} &=&f(0)\sum_{l=0}^{j-1}s_{n,l}(0)\sum_{k=0}^{j-1}s_{m,k}(x) \\
&=&f(0)\sum_{k=0}^{j-1}s_{m,k}(x) 
\end{eqnarray*}
and by using (\ref{eq-int2})
\begin{eqnarray*}
T_{4} &=&\sum_{k=j}^{\infty }s_{m,k}(x)\cdot m\cdot \int_{0}^{\infty
}s_{m,k-j}(t)\left[ f(0)\sum_{l=0}^{j-1}s_{n,l}(t)\right] dt \\
&=&f(0)m\sum_{k=j}^{\infty }s_{m,k}(x)\sum_{l=0}^{j-1} \int_{0}^{\infty
}s_{m,k-j}(t)s_{n,l}(t)dt \\
&=&f(0)m\sum_{k=j}^{\infty }s_{m,k}(x)\sum_{l=0}^{j-1}\frac{(k+l-j)!}{%
(k-j)!l!}\cdot \frac{m^{k-j}n^{l}}{(m+n)^{k+l-j+1}} \\
&=&f(0)\frac{m}{m+n}e^{-mx}\sum_{l=0}^{j-1}\frac{1}{l!}\left( \frac{n}{m+n}%
\right) ^{l}
\left( \frac{m}{m+n}\right) ^{-j}\sum_{k=j}^{\infty }\frac{(mx)^{k}}{k!}\frac{(k+l-j)!}{(k-j)!}
\cdot \left( \frac{m}{m+n}\right) ^{k} .
\end{eqnarray*}
With Leibniz formula for differentiation we can write 
\begin{eqnarray*}
	\sum_{k=j}^{\infty }\frac{z^{k}}{k!}\frac{(k+l-j)!}{(k-j)!}
	& = &
	z^j \sum_{k=j}^{\infty }\frac{1}{k!} \left ( z^{k+l-j} \right )^{(l)}
\\
	& = &
	z^j \frac{d^{l}}{dz^{l}}\left\{ z^{l-j}\left[
	e^{z}-\sum_{k=0}^{j-1}\frac{z^{k}}{k!}\right] \right\} 
\\
	&=&
	z^j \sum_{i=0}^{l}{\binom{l}{i}} (l-i)^{\underline{l-i}} z^{i-j}   
	\left [e^{z}-\sum_{k=i}^{j-1} \frac{z^{k-i}}{(k-i)!}\right] 
\\
	&=&
	\sum_{i=0}^{l}{\binom{l}{i}} (-1)^{l-i} (j-i-1)^{\underline{l-i}} z^{i}   
	\left [e^{z}-\sum_{k=i}^{j-1} \frac{z^{k-i}}{(k-i)!}\right] .	
\end{eqnarray*}%
From this equation by setting $z=\frac{m^2x}{m+n}$ we get after interchanging
the order of summation 
\begin{align*}
	T_{4} 
	&=
	f(0) \left (\frac{m}{m+n} \right )^{1-j} e^{-mx}
	\sum_{i=0}^{j-1}\frac{1}{i!}\left( \frac{m^2x}{m+n}\right)^{i}
	\sum_{l=i}^{j-1}{\binom{j-i-1}{l-i}}  (-1)^{l-i} \left( \frac{n}{m+n}\right) ^{l}
\\
	& \qquad \qquad \qquad \times \left\{ e^{\frac{m^{2}x}{m+n}} - \sum_{k=i}^{j-1} 
	\frac{\left( \frac{m^2x}{m+n}\right)^{k-i}}{(k-i)!} \right \} .
\end{align*}%
With 
\begin{equation*}
\sum_{l=i}^{j-1}{\binom{j-i-1}{l-i}}(-1)^{l-i}\left( \frac{n}{m+n}\right)
^{l}=\left( \frac{n}{m+n}\right) ^{i}\left( \frac{m}{m+n}\right) ^{j-i-1}
\end{equation*}
we derive
\begin{eqnarray*}
	T_{4} 
	&=&
	f(0)e^{-mx}\sum_{i=0}^{j-1}\frac{\left (\frac{m^2x}{m+n} \right )^i}{i!}
	\left(\frac{n}{m} \right )^i
	\left \{e^{\frac{m^{2}x}{m+n}} -\sum_{k=i}^{j-1} 
	\frac{\left( \frac{m^2x}{m+n}\right)^{k-i}}{(k-i)!} \right \}
\\
	&=& 
	f(0)\left\{ \sum_{i=0}^{j-1}s_{\frac{nm}{n+m},i}(x)
	-e^{-mx}\sum_{k=0}^{j-1} \frac{\left (\frac{m^2x}{m+n} \right )^k}{k!}
	\sum_{i=0}^{k}{\binom{k}{i}}\left( \frac{n}{m} \right)^{i}\right\} 
\\
	& = &
	f(0)\left\{ \sum_{i=0}^{j-1}s_{\frac{nm}{n+m},i}(x)	 - \sum_{k=0}^{j-1} s_{m,k}(x) \right \}
\\
	&=&
	T_{1}-T_{3}.
\end{eqnarray*}
\end{proof}

Now we get the desired commutativity of the operators and the representation
for their iterates as a corollary.

\begin{corollary}
\label{cor1} For all $f \in E_A$, $n,m > 2A$ we have 
\begin{equation*}  
S_{m,j}({S}_{n,j}f)={S}_{n,j}({S}_{m,j}f)
\end{equation*}
and for $l \in \mathbb{N}$, $m > lA$, 
\begin{equation}  \label{commut-3}
S_{m,j}^lf=S_{\frac{m}{l},j}f .
\end{equation}
\end{corollary}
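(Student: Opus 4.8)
The plan is to derive both assertions of Corollary \ref{cor1} directly from Theorem \ref{th-composite}, the only real content being a bookkeeping check that the parameter constraints of that theorem are satisfied in each instance.

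For the commutativity I would first exploit the symmetry of the map $(m,n)\mapsto \frac{mn}{m+n}$. Indeed, \emph{provided} Theorem \ref{th-composite} applies to both orderings, we immediately obtain
\[
S_{m,j}(S_{n,j}f) = S_{\frac{mn}{m+n},j}f = S_{\frac{nm}{n+m},j}f = S_{n,j}(S_{m,j}f),
\]
which is exactly the asserted commutativity. So the substantive task is to verify the hypotheses. Using the equivalence recorded just before the theorem---that for $n>A$ the condition $m > \frac{An}{n-A}$ coincides with $\frac{mn}{m+n} > A$, and symmetrically for the pair $(n,m)$---it suffices to check $n>A$, $m>A$, and $\frac{mn}{m+n} > A$. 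The last inequality rewrites as $(m-A)(n-A) > A^{2}$, and since $m,n > 2A$ give $m-A > A$ and $n-A > A$, the product exceeds $A^{2}$; the conditions $n>A$ and $m>A$ are immediate. Hence Theorem \ref{th-composite} applies to both orderings and commutativity follows.

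For the iterate formula \eqref{commut-3} I would argue by induction on $l$. The case $l=1$ is the trivial identity $S_{m,j}f = S_{m/1,j}f$. For the inductive step, assume that $S_{m,j}^{l}f = S_{m/l,j}f$ holds whenever $m > lA$, and let $m > (l+1)A$. Then in particular $m > lA$, so the hypothesis gives $S_{m,j}^{l+1}f = S_{m,j}\bigl(S_{m,j}^{l}f\bigr) = S_{m,j}\bigl(S_{m/l,j}f\bigr)$. Applying Theorem \ref{th-composite} with outer parameter $m$ and inner parameter $m/l$, a short computation yields $\frac{m\cdot(m/l)}{m + m/l} = \frac{m}{l+1}$, so the theorem produces $S_{m/(l+1),j}f$, which is the claim for $l+1$.

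The one point deserving care---the expected, though routine, obstacle---is confirming the theorem's hypotheses at the inductive step. The inner parameter must satisfy $m/l > A$, equivalent to $m > lA$, which holds; and the resulting parameter must satisfy $\frac{m}{l+1} > A$, equivalent to the standing assumption $m > (l+1)A$. These are precisely the inequalities that guarantee both that the composition is well defined and that Theorem \ref{th-composite} may be invoked, so the induction closes with no further analytic work.
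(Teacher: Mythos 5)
Your proposal is correct and follows exactly the route the paper intends: the corollary is deduced from Theorem \ref{th-composite} via the symmetry of $(m,n)\mapsto \frac{mn}{m+n}$ for commutativity and a straightforward induction for the iterates, with the only real work being the verification of the parameter constraints, which you carry out accurately (including the equivalence $m>\frac{An}{n-A}\Leftrightarrow \frac{mn}{m+n}>A$ and the computation $\frac{m\cdot(m/l)}{m+m/l}=\frac{m}{l+1}$). The paper itself states the result "as a corollary" without writing out these checks, so your argument supplies precisely the omitted bookkeeping.
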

\begin{remark}
From (\ref{commut-3}) it is easy to see that
\begin{equation*}
I-\left( I-S_{n,j}\right) ^{m+1} =\sum_{i=0}^{m}(-1)^{i}{\binom{m+1}{i+1}%
}S_{n,j}^{i+1} 
=\sum_{i=0}^{m}(-1)^{i}{\binom{m+1}{i+1}}S_{\frac{n}{i+1},j} ,
\end{equation*}
i.e., iterative combinations are linear combinations. For $j=1$ see \cite{Heilmann-Tachev-Phillips-2013}.
\end{remark}
\section{Derivatives and differential operators}

As already known for $j=0$ and $j=1$ the operators $S_{n,j}$ are strongly
connected to appropriate differential operators which can be used to state
asymptotic relations in a concise form. In Theorem \ref{lem-com-diff} we
will show that $S_{n,j}$ commutes with the corresponding differential
operator, already defined in \cite[(3)]{Snj2} for sufficiently smooth functions. This property was first proved
for $j=0$ in \cite[Lemma 3.1]{He1992} and for $j=1$ in \cite[Theorem 3.2]%
{Heilmann-Tachev-Philipps-2011}. For $j \leq 1$ see also \cite[Section 5]%
{Hei2015} and the references therein.

For the proof of Theorem \ref{lem-com-diff} we need the following
representation for the derivatives of $S_{n,j}f$ which can be found for $j=0$
in \cite[(3.11)]{Hei1989} or \cite[(3.1)]{He1992}. For $j\leq 0$ see \cite[%
Lemma 4]{Sw1}.

\begin{proposition}
Let $l\in {\mathbb{N}}$ and $f^{(l)}\in E_A$, $A>0$ and $n>A$. Then \label%
{prop-derivative} 
\begin{eqnarray*}
(S_{n,j} f)^{(l)}(x) & = & n \sum_{\nu =1}^{\min{\{j-1,l-1\}}}
s_{n,j-1-\nu}^{(l-1-\nu)} (x) f^{(\nu)} (0) \\
& & + n \sum_{k = \max{\{0,j-l\}}}^{\infty} s_{n,k} (x) \int_0^\infty
s_{n,k+l-j} (t) f^{(l)} (t) dt.
\end{eqnarray*}
\end{proposition}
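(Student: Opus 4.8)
The plan is to prove the representation by induction on $l$, differentiating the order-$l$ formula to obtain the order-$(l+1)$ one. The only tools required are the derivative relation (\ref{eq-der1}), a single index shift in the resulting series, and one integration by parts per step. The hypothesis $f^{(l)}\in E_A$ propagates to every lower derivative (integrating the bound $|f^{(l)}(t)|\le Ke^{At}$ shows $f^{(\nu)}\in E_A$ for $\nu\le l$), and together with $n>A$ this guarantees both that the series may be differentiated termwise and that all boundary contributions at $t=\infty$ vanish; I would dispatch these convergence points quickly as routine.

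For the base case $l=1$ I would differentiate the definition directly. Differentiating $f(0)\sum_{k=0}^{j-1}s_{n,k}$ via (\ref{eq-der1}) telescopes to $-nf(0)s_{n,j-1}$, while differentiating the series part and shifting the index yields $n\,s_{n,j-1}(x)\bigl(c_j-f(0)\bigr)+n\sum_{k\ge j}s_{n,k}(x)(c_{k+1}-c_k)$ with $c_k=n\int_0^\infty s_{n,k-j}(t)f(t)\,dt$. Rewriting $s_{n,m}-s_{n,m-1}=-\tfrac1n s_{n,m}'$ from (\ref{eq-der1}) and integrating by parts converts each difference $c_{k+1}-c_k$ into $\int_0^\infty s_{n,k+1-j}(t)f'(t)\,dt$, the $t=0$ boundary term vanishing because the relevant index $k+1-j\ge 1$; the leftover $n\,s_{n,j-1}(x)(c_j-f(0))$ collapses, after one integration by parts against $s_{n,0}(t)=e^{-nt}$, into exactly the $k=j-1$ summand. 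This gives the $l=1$ formula with empty first sum, since $\min\{j-1,0\}\le 0$.

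For the inductive step I would apply $\tfrac{d}{dx}$ to the order-$l$ formula. The finite sum merely raises each derivative order, $s_{n,j-1-\nu}^{(l-1-\nu)}\mapsto s_{n,j-1-\nu}^{(l-\nu)}$, matching the order-$(l+1)$ finite sum except possibly for its top index. In the series part, (\ref{eq-der1}) and an index shift produce, with $K=\max\{0,j-l\}$ and $d_k=\int_0^\infty s_{n,k+l-j}(t)f^{(l)}(t)\,dt$, the expression $n^2 s_{n,K-1}(x)d_K+n^2\sum_{k\ge K}s_{n,k}(x)(d_{k+1}-d_k)$. As before, integration by parts turns each $d_{k+1}-d_k$ into $\tfrac1n\int_0^\infty s_{n,k+l+1-j}(t)f^{(l+1)}(t)\,dt$, whose $t=0$ boundary vanishes because $k+l+1-j\ge 1$ for $k\ge K$; thus $n^2\sum_{k\ge K}s_{n,k}(d_{k+1}-d_k)$ is the bulk of the order-$(l+1)$ series, starting one index too high.

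The decisive point, and the main obstacle, is the residual term $n^2 s_{n,K-1}(x)d_K$, which I would treat by a case distinction on the sign of $j-l$. If $j\le l$ then $K=0$, so $s_{n,K-1}=s_{n,-1}=0$ and the term vanishes; one checks that the ranges $\max\{0,\cdot\}$ and $\min\{j-1,\cdot\}$ are already correct for $l+1$, so the formula closes immediately. If $j>l$ then $K=j-l\ge 1$ and $d_K=\int_0^\infty s_{n,0}(t)f^{(l)}(t)\,dt$; integrating by parts against $e^{-nt}$ splits this as $\tfrac1n f^{(l)}(0)+\tfrac1n\int_0^\infty s_{n,0}(t)f^{(l+1)}(t)\,dt$. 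The integral piece is precisely the missing $k=K-1$ summand that lowers the series' starting index to $\max\{0,j-l-1\}=K-1$, while the value piece contributes $n\,s_{n,j-l-1}(x)f^{(l)}(0)$, which is exactly the new $\nu=l$ term extending the finite sum to its order-$(l+1)$ range $1\le\nu\le\min\{j-1,l\}=l$. So both structural sums reassemble correctly, and the crux is simply tracking how this single $t=0$ boundary value migrates into the $f^{(\nu)}(0)$ sum exactly when the index shift forces the lowest basis function to be $s_{n,0}$.
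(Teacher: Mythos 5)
Your proposal is correct and follows essentially the same route as the paper: induction on $l$, using (\ref{eq-der1}) with an index shift and one integration by parts per step, with the $t=0$ boundary value of $s_{n,0}$ supplying the new $f^{(l)}(0)$ term in the finite sum. Your explicit case distinction on the sign of $j-l$ is handled in the paper implicitly via the convention $s_{n,k}=0$ for $k<0$ and the $\max\{0,j-(l+1)\}$ lower limit, but the argument is the same.
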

\begin{remark}
It is easy to see that $f^{(l)}\in E_A$, $A>0$ implies $f \in E_A$.  In the case $A=0$ we have $f \in E_{\tilde{A}}$ for arbitrary $\tilde{A} >0$.
\end{remark}
\begin{remark}
For $j \leq 1$ we get the identity 
\begin{equation*}
(S_{n,j} f)^{(l)} = S_{n,j-l} f^{(l)} .
\end{equation*}
\end{remark}

\begin{proof}
Note that $f^{(l)}\in E_A$ implies $f\in E_A$.

We proceed by induction on $l$.\newline
$l=1$: Using (\ref{eq-der1}), index transform and again (\ref{eq-der1}) we
get 
\begin{eqnarray*}
(S_{n,j}f)^{\prime }(x) & = & n f (0) s_{n,j-1} (x) - n s_{n,j-1} (x)
\int_0^\infty s_{n,0}^{\prime }(t) f (t) dt \\
& & - n \sum_{k = j}^{\infty} s_{n,k} (x) \int_0^\infty s_{n,k+1-j}^{\prime
}(t) f (t) dt \\
& = & n \sum_{k = j-1}^{\infty} s_{n,k} (x) \int_0^\infty s_{n,k+1-j} (t)
f^{\prime }(t) dt \, ,
\end{eqnarray*}
with integration by parts for the last equality. \newline
$l \Rightarrow l+1$ 
\begin{eqnarray*}
(S_{n,j} f)^{(l+1)}(x) & = & n \sum_{\nu =1}^{\min{\{j-1,l-1\}}}
s_{n,j-1-\nu}^{(l-\nu)} (x) f^{(\nu)} (0) \\
& & + n \sum_{k = \max{\{0,j-l\}}}^{\infty} s_{n,k}^{\prime }(x)
\int_0^\infty s_{n,k+l-j} (t) f^{(l)} (t) dt \\
& =:& S_1 + S_2
\end{eqnarray*}
Using (\ref{eq-der1}), index transform and again (\ref{eq-der1}) we get 
\begin{eqnarray*}
S_2 & = & - n \sum_{k = \max{\{0,j-(l+1)\}}}^{\infty} s_{n,k} (x)
\int_0^\infty s_{n,k+l+1-j}^{\prime} f^{ (l)}(t) dt \\
& = & n \sum_{k = \max{\{0,j-(l+1)\}}}^{\infty} s_{n,k} (x) \int_0^\infty
s_{n,k+l+1-j} (t) f^{(l+1)} (t) dt \\
&&+ n s_{n,j-(l+1)} (x) f^{(l)} (0) ,
\end{eqnarray*}
again integration by parts for the last equation. Together with $S_1$ this
proves our proposition.
\end{proof}

For $j\in {\mathbb{Z}}$ define 
\begin{equation}
\mathcal{D}^{2}_j=(1-j)D+e_{1}D^{2},  \label{eq.X2}
\end{equation}%
where $D$ denotes the ordinary differentiation operator.

It is easy to see that 
\begin{equation}  \label{eq.X2-1}
\mathcal{D}^{2}_j =e_{j}D(e_{1-j}D).
\end{equation}
We mention that $\mathcal{D}_j^2 e_j=0$.

\begin{theorem}
\label{lem-com-diff} Let $f\in E$ such that $\mathcal{D}_{j}^{2}f\in E$.
Furthermore we assume that $f^{\prime }(0)=0$ in the case $j\geq 2$. Then
for sufficiently large $n$ the operators $S_{n,j}$ and $\mathcal{D}_{j}^{2}$
commute, i.e., 
\begin{equation*}
\left( S_{n,j}\circ \mathcal{D}_{j}^{2}\right) f=\left( \mathcal{D}%
_{j}^{2}\circ S_{n,j}\right) f.
\end{equation*}
\end{theorem}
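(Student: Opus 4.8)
The plan is to verify the identity by computing both $\left(\mathcal{D}_{j}^{2}\circ S_{n,j}\right)f$ and $\left(S_{n,j}\circ \mathcal{D}_{j}^{2}\right)f$ as explicit series in the basis functions $s_{n,k}$ and checking that the two series coincide. The two elementary facts that drive everything are the derivative rule (\ref{eq-der1}), $s_{n,k}^{\prime}=n(s_{n,k-1}-s_{n,k})$, and the multiplication rule $x\,s_{n,k}(x)=\frac{k+1}{n}s_{n,k+1}(x)$, which follows at once from the definition of $s_{n,k}$ and turns the factor $e_{1}$ in $\mathcal{D}_{j}^{2}=(1-j)D+e_{1}D^{2}$ into an index shift.

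For the left-hand side I would start from the derivative representation of the Proposition with $l=1$ and $l=2$. Writing $\mathcal{D}_{j}^{2}S_{n,j}f=(1-j)(S_{n,j}f)^{\prime}+e_{1}(S_{n,j}f)^{\prime\prime}$, I apply $x\,s_{n,k}=\frac{k+1}{n}s_{n,k+1}$ to $e_{1}(S_{n,j}f)^{\prime\prime}$ to re-index, and then integrate by parts each $\int_{0}^{\infty}s_{n,p}(t)f^{\prime\prime}(t)\,dt$, using (\ref{eq-der1}) to obtain $\int_{0}^{\infty}s_{n,p}f^{\prime\prime}=n\int_{0}^{\infty}s_{n,p}f^{\prime}-n\int_{0}^{\infty}s_{n,p-1}f^{\prime}$ up to a boundary contribution $-s_{n,p}(0)f^{\prime}(0)$ at $t=0$. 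For the right-hand side I apply the definition of $S_{n,j}$ to $h:=\mathcal{D}_{j}^{2}f=(1-j)f^{\prime}+e_{1}f^{\prime\prime}$; here $h(0)=(1-j)f^{\prime}(0)$, and I integrate by parts the kernel integral $\int_{0}^{\infty}s_{n,m}(t)\big[(1-j)f^{\prime}(t)+t\,f^{\prime\prime}(t)\big]\,dt$, again via $t\,s_{n,m}=\frac{m+1}{n}s_{n,m+1}$ and (\ref{eq-der1}). The goal is that both expressions collapse to the same series
\[
n\sum_{k}s_{n,k}(x)\Big[(k-j+1)\!\int_{0}^{\infty}\! s_{n,k+1-j}(t)f^{\prime}(t)\,dt-k\!\int_{0}^{\infty}\! s_{n,k-j}(t)f^{\prime}(t)\,dt\Big],
\]
with the convention $s_{n,m}=0$ for $m<0$.

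The one genuine obstacle is the book-keeping of the two kinds of boundary data, and this is exactly where the hypotheses enter, so I would organise the endgame as a short case distinction on $j$. For $j\geq 2$ the assumption $f^{\prime}(0)=0$ is used three times: it cancels the boundary term $n\,s_{n,j-2}(x)f^{\prime}(0)$ in $(S_{n,j}f)^{\prime\prime}$ coming from the Proposition, it kills the constant $h(0)=(1-j)f^{\prime}(0)$ so that the finite sum $\sum_{k=0}^{j-1}s_{n,k}$ drops out of $S_{n,j}h$, and it removes the $t=0$ contribution $-s_{n,p}(0)f^{\prime}(0)$ in the single case $p=0$. After these cancellations the left-hand series formally runs from $k=j-1$ while the right-hand one runs from $k=j$, but the extra $k=j-1$ term equals $(k-j+1)\int s_{n,0}f^{\prime}-k\int s_{n,-1}f^{\prime}=0$, so the ranges agree. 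For $j\leq 0$ there are no finite sums and every index $p=k+1-j$ satisfies $p\geq 1$, so no boundary terms occur and the two series match directly from $k=0$; the borderline case $j=1$, where $\mathcal{D}_{1}^{2}=e_{1}D^{2}$ and $h(0)=0$ automatically, is handled the same way, the $k=0$ term vanishing on both sides. Finally I would record the analytic side conditions: for $n$ large enough the defining series may be differentiated termwise, the integrals converge, and the boundary terms at $t=\infty$ vanish because the exponential decay of $s_{n,p}$ dominates the controlled growth of $f^{\prime}$ — all guaranteed by the growth hypotheses and the Proposition for sufficiently large $n$.
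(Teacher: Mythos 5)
Your proposal is correct and takes essentially the same route as the paper: both rest on Proposition~\ref{prop-derivative} with $l=1,2$, the index-shift identities $t\,s_{n,m}(t)=\frac{m+1}{n}s_{n,m+1}(t)$ and $k\,s_{n,k}(x)=nx\,s_{n,k-1}(x)$, integration by parts via (\ref{eq-der1}), and the hypothesis $f'(0)=0$ to kill exactly the three boundary contributions you identify. The only cosmetic difference is that you collapse both sides to one common series, whereas the paper subtracts them and checks that the residual $(1-j)n\,s_{n,j-1}(x)\int_0^\infty s_{n,0}f' + nx\,s_{n,j-2}(x)\int_0^\infty s_{n,0}f''$ vanishes.
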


\begin{proof}
For 
$j=1$ see \cite[Theorem 3.2]{Heilmann-Tachev-Philipps-2011} and
for $j \leq 1$ \cite[Section 5]{Hei2015} and the references therein. 

Let $j \geq 2$. From Proposition \ref{prop-derivative} we have 
\begin{eqnarray*}
\mathcal{D}^{2}_j ( S_{n,j} f)(x) & = & (1-j)n
\sum_{k=j-1}^{\infty} s_{n,k} (x) \int_0^\infty s_{n,k+1-j} (t) f^{\prime
}(t) dt \\
& & + nx \sum_{k=j-2}^{\infty} s_{n,k} (x) \int_0^\infty s_{n,k+2-j} (t)
f^{\prime \prime }(t) dt .
\end{eqnarray*}
Now we apply the operator $S_{n,j}$ to $\mathcal{D}^{2}_jf$, i.e., 
\begin{eqnarray*}
(S_{n,j} \mathcal{D}^{2}_j f)(x) & = & (1-j)n \sum_{k=j}^{\infty} s_{n,k} (x)
\int_0^\infty s_{n,k-j} (t) f^{\prime }(t) dt \\
& & + n \sum_{k=j}^{\infty} s_{n,k} (x) \int_0^\infty s_{n,k-j} (t) t
f^{\prime \prime }(t) dt \\
& =: & T_1+T_2
\end{eqnarray*}
Using $nt s_{n,k-j} (t) = (k+1-j) s_{n,k+1-j} (t)$ and $k s_{n,k} (x) =nx
s_{n,k-1} (x)$ we get 
\begin{eqnarray*}
T_2 & = & nx \sum_{k=j}^{\infty} s_{n,k-1} (x) \int_0^\infty s_{n,k+1-j} (t)
f^{\prime \prime }(t) dt \\
& & + (1-j) \sum_{k=j}^{\infty} s_{n,k} (x) \int_0^\infty s_{n,k+1-j} (t)
f^{\prime \prime }(t) dt .
\end{eqnarray*}
Index transform in the first sum, integration by parts in the second sum and
application of $s_{n,k+1-j}^{\prime }(t) = n(s_{n,k-j}-s_{n,k+1-j})$ leads
to 
\begin{eqnarray*}
T_2 & = & nx \sum_{k=j-1}^{\infty} s_{n,k} (x) \int_0^\infty s_{n,k+2-j} (t)
f^{\prime \prime }(t) dt \\
& & + (1-j) n \sum_{k=j}^{\infty} s_{n,k} (x) \int_0^\infty s_{n,k+1-j} (t)
f^{\prime }(t) dt \\
& & - (1-j) n \sum_{k=j}^{\infty} s_{n,k} (x) \int_0^\infty s_{n,k-j} (t)
f^{\prime }(t) dt .
\end{eqnarray*}
Thus 
\begin{eqnarray*}
\lefteqn{\mathcal{D}^{2}_j ( S_{n,j}  f)(x) - (S_{n,j} \mathcal{D}^{2}_j f)(x)} \\
& = & (1-j)n s_{n,j-1} (x) \int_0^\infty s_{n,0} (t) f^{\prime }(t) dt +nx
s_{n,j-2} (x) \int_0^\infty s_{n,0} (t)f^{\prime \prime }(t) dt \\
& = & 0
\end{eqnarray*}
as $nx s_{n,j-2} (x) = (j-1) s_{n,j-1} (x)$ and $\int_0^\infty s_{n,0} (t)
f^{\prime \prime }(t) dt = n \int_0^\infty s_{n,0} (t) f^{\prime }(t) dt$.
\end{proof}

Define 
\begin{equation}  \label{eq.X3}
\mathcal{D}^{2l}_j := e_j D^l (e_{l-j} D^l ),\, l\in {\mathbb{N}}.
\end{equation}
For $l=1$ this is in agreement with (\ref{eq.X2}). Moreover, define $D^0_j$
to be the identity operator.

\begin{proposition}
\label{iteratediff} For $l\in {\mathbb{N}}$ the differential operator $%
\mathcal{D}^{2l}_j $ can be written as $l$-th iterate of $\mathcal{D}^{2}_j $%
, i.e., 
\begin{equation}
\mathcal{D}_{j}^{2l}=(\mathcal{D}_{j}^{2})^{l}.  \label{eq.X1}
\end{equation}
\end{proposition}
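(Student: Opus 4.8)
The plan is to reduce everything to the Euler (theta) operator $\Theta:=e_1D$, in terms of which every operator in play becomes a Laurent weight $e_m$ times a \emph{polynomial} in the single operator $\Theta$; since such polynomials commute with one another, the iteration collapses without delicate bookkeeping. First I would record three elementary facts, each proved by a one-line induction from $\Theta g=e_1g'$: the commutation rule $\Theta\,e_c=e_c(\Theta+c)$, hence $P(\Theta)\,e_c=e_c\,P(\Theta+c)$ for every polynomial $P$ and every exponent $c$; and the factorisation of iterated derivatives $e_lD^l=\Theta(\Theta-1)\cdots(\Theta-l+1)$, equivalently $D^l=e_{-l}\,\Theta(\Theta-1)\cdots(\Theta-l+1)$.

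Next I would put the generator into $\Theta$-form. Writing $e_jD=e_{j-1}\Theta$ and $e_{1-j}D=e_{-j}\Theta$ in (\ref{eq.X2-1}) and then moving $\Theta$ past $e_{-j}$ via $\Theta e_{-j}=e_{-j}(\Theta-j)$ gives the compact expression
\[
\mathcal{D}^{2}_j=e_{-1}(\Theta-j)\Theta ,
\]
which one may double-check against (\ref{eq.X2}) using $\Theta^2=e_2D^2+\Theta$.

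Then I would compute the two sides of (\ref{eq.X1}) independently and observe that they coincide. For the left-hand side, iterating $\mathcal{D}^{2}_j=e_{-1}(\Theta-j)\Theta$ and repeatedly pushing each weight $e_{-1}$ to the far left through $P(\Theta)e_{-1}=e_{-1}P(\Theta-1)$ shifts the polynomial argument by one unit at each pass and yields
\[
(\mathcal{D}^{2}_j)^l=e_{-l}\prod_{i=0}^{l-1}(\Theta-j-i)\prod_{i=0}^{l-1}(\Theta-i).
\]
For the right-hand side, substituting $D^l=e_{-l}\,\Theta(\Theta-1)\cdots(\Theta-l+1)$ into (\ref{eq.X3}) and moving the single weight $e_{-j}$ past the first falling-factorial block via $P(\Theta)e_{-j}=e_{-j}P(\Theta-j)$ collapses $\mathcal{D}^{2l}_j=e_jD^le_{l-j}D^l$ to exactly the same product. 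Equating the two establishes $\mathcal{D}^{2l}_j=(\mathcal{D}^{2}_j)^l$.

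The point I expect to be the genuine obstacle is resisting the temptation to argue by a direct induction on $l$ through $\mathcal{D}^{2}_j\circ\mathcal{D}^{2l}_j=\mathcal{D}^{2(l+1)}_j$: expanding $D^le_{c}$ by the Leibniz rule produces falling-factorial coefficients and spurious negative-power terms (e.g.\ contributions in $e_{-1}D$) that only cancel after a nontrivial binomial identity. The $\Theta$-calculus is exactly what automates those cancellations, because there every object is a commuting polynomial in $\Theta$; consequently, once the commutation lemmas are in place, no induction is needed at all.
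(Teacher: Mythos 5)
Your argument is correct, and it takes a genuinely different route from the paper. The paper proves \eqref{eq.X1} by induction on $l$, composing $\mathcal{D}^{2l}_j\circ\mathcal{D}^{2}_j$ (with $\mathcal{D}^{2}_j$ applied first); in that order the only Leibniz expansion needed is $D^{l}(e_{1}D^{2})=e_{1}D^{l+2}+lD^{l+1}$, which has just two terms because $e_{1}''=0$, so the induction closes in three lines without any binomial identity. Your worry that the direct induction forces "nontrivial cancellations" is therefore not borne out for this particular composition order (it would be more painful if one composed in the other order and had to push $D^{l}$ past $e_{l-j}$ at each step). That said, your $\Theta$-calculus proof is complete and valid: the commutation rule $P(\Theta)e_{c}=e_{c}P(\Theta+c)$, the factorisation $e_{l}D^{l}=\Theta(\Theta-1)\cdots(\Theta-l+1)$, and the normal form $\mathcal{D}^{2}_j=e_{-1}(\Theta-j)\Theta$ all check out, and both sides of \eqref{eq.X1} reduce to
\[
e_{-l}\prod_{i=0}^{l-1}(\Theta-j-i)\prod_{i=0}^{l-1}(\Theta-i).
\]
What your approach buys is a closed factored form for $\mathcal{D}^{2l}_j$ with no induction at all, which makes the kernel and the eigenvalue computations of the later sections transparent and yields the subsequent Corollary's expansion $\sum_{i}\binom{l}{i}(l-j)^{\underline{l-i}}e_{i}D^{l+i}$ as an immediate by-product of converting the $\Theta$-polynomial back to $e_{i}D^{l+i}$ form; what the paper's induction buys is brevity and the avoidance of any auxiliary operator calculus.
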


\begin{proof}
For $j\leq 0$ see \cite[(13)]{Hei2015}. We prove the relation (\ref{eq.X1})
by induction on $l$.

For $l=1$ it is sufficient to use (\ref{eq.X2}).

Suppose (\ref{eq.X1}) is true for $l$. Then 
\begin{eqnarray*}
\mathcal{D}^{2l}_j \circ \mathcal{D}^{2}_j & = & \mathcal{D}^{2l}_j
[(1-j)D+e_1D^2] \\
& = & e_j D^l \left \{ e_{l-j} D^l [(1-j)D +e_1 D^2] \right \} \\
& = & e_j D^l \left \{ e_{l-j} [(l+1-j)D^{l+1} +e_1 D^{l+2}] \right \}
\end{eqnarray*}
and 
\begin{eqnarray*}
\mathcal{D}^{2l+2}_j & = & e_j D^{l+1}
[(l+1-j)e_{l-j}D^{l+1}+e_{l+1-j}D^{l+2}] \\
& = & e_j D^l \left \{ e_{l-j} [(l+1-j)D^{l+1} +e_1 D^{l+2}] \right \} .
\end{eqnarray*}
\end{proof}

As a corollary of Proposition \ref{iteratediff} we derive an explicit
representation of the differential operator.

\begin{corollary}
For $l \in \mathbb{N}$ we have 
\begin{equation*}
{\mathcal{D}}^{2l}_j= ({\mathcal{D}}^{2}_j)^{l} =\sum_{i=0}^{l}\binom{l}{i}%
\left( l-j\right) ^{\underline{l-i}}e_{i}D^{l+i}.
\end{equation*}
\end{corollary}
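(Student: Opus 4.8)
The plan is to reduce everything to the definition (\ref{eq.X3}) and then expand by the Leibniz rule. By Proposition \ref{iteratediff} we already know that $(\mathcal{D}^2_j)^l=\mathcal{D}^{2l}_j$, so the first equality is free and it suffices to show that $\mathcal{D}^{2l}_j=e_j D^l(e_{l-j}D^l)$ agrees with the claimed sum. The computation is purely algebraic, so I would carry it out formally on smooth functions, ignoring the analytic background.

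First I would record the elementary rule $D^k e_a = a^{\underline{k}} e_{a-k}$, valid for any real exponent $a$ and any $k\in\mathbb{N}$, where $a^{\underline{k}}=a(a-1)\cdots(a-k+1)$ is the falling factorial. Applying this with $a=l-j$ and $k=l-i$ gives the one identity I really need,
\begin{equation*}
D^{l-i}e_{l-j}=(l-j)^{\underline{l-i}}\,e_{i-j},
\end{equation*}
since the exponent is $(l-j)-(l-i)=i-j$.

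Next I would apply the general Leibniz rule to the inner expression. Writing $g:=D^lf$, so that $D^i g=D^{l+i}f$, I get
\begin{equation*}
D^l(e_{l-j}\,g)=\sum_{i=0}^l\binom{l}{i}(D^{l-i}e_{l-j})(D^i g)=\sum_{i=0}^l\binom{l}{i}(l-j)^{\underline{l-i}}\,e_{i-j}\,D^{l+i}f .
\end{equation*}
Multiplying by $e_j$ and using $e_j e_{i-j}=e_i$ then collapses the exponents back to the non-negative integer $i$, yielding
\begin{equation*}
\mathcal{D}^{2l}_j f=e_j D^l(e_{l-j}D^l f)=\sum_{i=0}^l\binom{l}{i}(l-j)^{\underline{l-i}}\,e_i\,D^{l+i}f ,
\end{equation*}
which is precisely the asserted formula.

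There is no real obstacle here; the only point deserving a word is the status of the intermediate factor $e_{i-j}$, whose exponent $i-j$ can be negative. I would note that the product $e_j e_{i-j}=e_i$ has a non-negative integer exponent, so the resulting operator has polynomial coefficients and is well defined on $[0,\infty)$. Moreover, when $l-j$ happens to be a non-negative integer (so that $e_{l-j}$ is a genuine polynomial and some derivatives $D^{l-i}e_{l-j}$ vanish), the factor $(l-j)^{\underline{l-i}}$ is zero for exactly those indices, so the single displayed formula holds verbatim in every case.
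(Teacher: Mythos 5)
Your proposal is correct and follows essentially the same route as the paper: the paper's own proof is just the two-line expansion of $e_jD^l(e_{l-j}D^l)$ via the Leibniz rule, which is exactly what you carry out (with the welcome extra care about the intermediate exponent $i-j$ and the vanishing of $(l-j)^{\underline{l-i}}$ in the polynomial case). No issues.
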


\begin{proof}
According to (\ref{eq.X3}), we have 
\begin{eqnarray*}
\mathcal{D}^{2l}_j &=&e_{j}D^{l}(e_{l-j}D^{l}) \\
& = & \sum_{i=0}^{l}\binom{l}{i}\left( l-j\right) ^{\underline{l-i}%
}e_{i}D^{l+i}.
\end{eqnarray*}
\end{proof}

Observe that ${\mathcal{D}}^{2l}_j =\sum_{i=j}^{l}\binom{l}{i}%
\left( l-j\right) ^{\underline{l-i}}e_{i}D^{l+i}$ in case of $l \geq j \geq 0$.

\section{Eigenstructure of $S_{n,j}$}

In this section we study the eigenstructure of the operators $S_{n,j}$, $j\in \mathbb{Z}$.
As we already know, $e_0$ is an eigenfunction with eigenvalue $1$ for each $j\in \mathbb{Z}$. Now we present a family of eigenfunctions $g_{j,p}$, $p\in 
\mathbb{R} $, of $S_{n,j}$ which are independent of $n$ and can be expressed as  
modified Bessel functions $I_{j}$
of the first kind. 

\begin{theorem}
\label{theorem-eigenfunctions} Let $j \in \mathbb{Z}$. For each $p\in {%
\mathbb{R}}$, the function 
\begin{align*}
g_{j,p}(x) &=\sum_{m=\max{\{0,-j\}}}^{\infty} \frac{p^m x^{m+j}}{m!(m+j)!}
\end{align*}%
is an eigenfunction of $S_{n,j}$ with eigenvalue $e^{p/n}$.
\end{theorem}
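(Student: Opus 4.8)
The plan is to reduce the claim to the action of $S_{n,j}$ on the individual monomials making up $g_{j,p}$ and then to resum. Since $g_{j,p}$ is, up to an elementary factor, a modified Bessel function $I_{j}(2\sqrt{px})$, it grows slower than $e^{Ax}$ for every $A>0$; hence $g_{j,p}\in E_{A}$ for all $A>0$, so $S_{n,j}g_{j,p}$ is well defined for every $n>0$ and, because the series involved are absolutely convergent, I may apply $S_{n,j}$ termwise and interchange orders of summation and integration freely. When $j\geq 1$ the lowest power occurring in $g_{j,p}$ is $x^{j}$, so $g_{j,p}(0)=0$ and the boundary term $f(0)\sum_{k=0}^{j-1}s_{n,k}(x)$ in the definition of $S_{n,j}$ drops out; when $j\leq 0$ that term is absent anyway.

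First I would compute $S_{n,j}e_{m+j}$ for the monomials $e_{m+j}$ with $m\geq\max\{0,-j\}$. Evaluating the inner integral by (\ref{eq-int1}) gives $\int_{0}^{\infty}s_{n,k-j}(t)\,t^{m+j}\,dt=\frac{(k+m)!}{(k-j)!}\,n^{-m-j-1}$, after which, writing $s_{n,k}(x)=\frac{(nx)^{k}}{k!}e^{-nx}$, the $k$-sum takes exactly the shape handled by the resummation identity used in the proof of Theorem \ref{th-composite}. That identity collapses the series against the factor $e^{-nx}$ and produces the finite triangular formula
\begin{equation*}
(S_{n,j}e_{m+j})(x)=\sum_{\nu=0}^{m}\binom{m}{\nu}\frac{(m+j)!}{(\nu+j)!}\,n^{\nu-m}\,x^{\nu+j}\qquad(j\geq 1),
\end{equation*}
together with the analogous expression (indices shifted by $-j$) in the case $j\leq 0$. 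The essential point is that the $e^{-nx}$ factor cancels, so $S_{n,j}$ sends each $e_{m+j}$ to a finite combination of the lower terms $e_{\nu+j}$, $0\leq\nu\leq m$.

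Then I would substitute this into $g_{j,p}=\sum_{m}\frac{p^{m}}{m!(m+j)!}\,e_{m+j}$. The factor $(m+j)!$ cancels, and after replacing $\binom{m}{\nu}$ by $\frac{m!}{\nu!(m-\nu)!}$ the $m!$ cancels as well, leaving a double sum over $\nu\geq 0$ and $m\geq\nu$. Introducing the excess index $l=m-\nu\geq 0$ decouples the two summations: the $\nu$-sum reproduces $g_{j,p}(x)$, while the $l$-sum is $\sum_{l=0}^{\infty}\frac{(p/n)^{l}}{l!}=e^{p/n}$. This yields $(S_{n,j}g_{j,p})(x)=e^{p/n}g_{j,p}(x)$, as claimed. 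The case $j\leq 0$ runs identically once one reindexes $g_{j,p}=\sum_{\nu\geq 0}\frac{p^{\nu-j}}{\nu!(\nu-j)!}\,e_{\nu}$.

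I expect the only genuine obstacle to lie in the middle step, namely recognizing that the image of each monomial resums to the clean finite triangular form: everything hinges on matching the $k$-series to the generating-function identity already established in the proof of Theorem \ref{th-composite}, and once the $e^{-nx}$ cancellation is secured, the final factorization of the double series is a routine reindexing. The growth estimate on $g_{j,p}$ needed to license the interchanges is likewise routine given the Bessel asymptotics.
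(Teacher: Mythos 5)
Your proposal is correct and follows essentially the same route as the paper: the same integral evaluation via (\ref{eq-int1}), the same Leibniz-type resummation identity $\sum_{k}\frac{(k+m+j)!}{k!}\frac{n^{k}x^{k+j}}{(k+j)!}=\frac{d^{m}}{dx^{m}}\left(x^{m+j}e^{nx}\right)$ that cancels the $e^{-nx}$ factor, and the same final reindexing $l=m-\nu$ producing the factor $e^{p/n}$. The only cosmetic difference is that you organize the computation monomial by monomial (computing $S_{n,j}e_{m+j}$ first) rather than manipulating the full double sum at once, which changes nothing of substance.
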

\begin{proof}
For $p=0$ and $j \geq 0$ we get $g_{j,0}(x) = \frac{x^j}{j!} = \frac{1}{j!} S_{n,j} (e_j;x)$ by
the preservation of $e_j$.

Now let $p\not= 0$.

By using (\ref{eq-int1}) we get for $j \in \mathbb{N}_0$
\begin{eqnarray*}
	(S_{n,j} g_{j,p})(x)
	& = &
	\sum_{k=j}^{\infty} s_{n,k} (x) \sum_{m=0}^{\infty} 
	\frac{\left(\frac{p}{n}\right)^m}{m!(m+j)!} \cdot \frac{(k+m)!}{(k-j)!}n^{-j}
\\
	& = &
	e^{-nx} \sum_{m=0}^{\infty} \frac{\left(\frac{p}{n}\right)^m}{m!(m+j)!} 
	\sum_{k=0}^{\infty} \frac{n^k x^{k+j}}{(k+j)!} \cdot \frac{(k+m+j)!}{k!} .
\end{eqnarray*}
As
\begin{eqnarray*}
	\sum_{k=0}^{\infty} \frac{n^k x^{k+j}}{(k+j)!} \cdot \frac{(k+m+j)!}{k!}
	& = &
	\sum_{k=0}^{\infty} \frac{n^k}{k!} \left ( x^{k+m+j} \right )^{(m)}
\\
	 = 
	\frac{d^m}{dx^m} \left (x^{m+j} e^{nx} \right )
	& = &
	\sum_{l=0}^{m} \binom{m}{l} \frac{(m+j)!}{(l+j)!} x^{l+j} n^l e^{nx} ,
\end{eqnarray*}
we derive
\begin{eqnarray*}
	(S_{n,j} g_{j,p})(x)
	& = &
	\sum_{m=0}^{\infty} \left(\frac{p}{n}\right)^m
	\sum_{l=0}^{m} \frac{n^l x^{l+j}}{l! (l+j)! (m-l)!}
\\
	& = &
	\sum_{l=0}^{\infty}  \frac{x^{l+j}p^l}{l! (l+j)!}
	\sum_{m=l}^{\infty} \frac{ \left(\frac{p}{n}\right)^{m-l}}{(m-l)!}
\\
	& = &
	e^{\frac{p}{n}} g_{j,p} (x) .
\end{eqnarray*}	
 For $-j \in \mathbb{N}$ we get in a similar way
\begin{eqnarray*}
	(S_{n,j} g_{j,p})(x)
	& = &
	e^{-nx} \sum_{m=-j}^{\infty} \frac{\left(\frac{p}{n}\right)^m n^{-j}x^j}{m!(m+j)!} 
	\sum_{k=0}^{\infty} \frac{n^k x^{k-j}}{(k-j)!} \cdot \frac{(k+m)!}{k!} .
\end{eqnarray*}
As
\begin{eqnarray*}
	\sum_{k=0}^{\infty} \frac{n^k x^{k-j}}{(k-j)!} \cdot \frac{(k+m)!}{k!}
	& = &
	\sum_{k=0}^{\infty} \frac{n^k}{k!} \left ( x^{k+m} \right )^{(m+j)}
\\
	 = 
	\frac{d^{m+j}}{dx^{m+j}} \left (x^{m} e^{nx} \right )
	& = &
	\sum_{l=0}^{m+j} \binom{m+j}{l} \frac{m!}{(l-j)!} x^{l-j} n^l e^{nx} ,
\end{eqnarray*}
we conclude
\begin{eqnarray*}
	S_{n,j} (g_{j,p};x)
	& = &
	\sum_{m=-j}^{\infty} \left(\frac{p}{n}\right)^m
	\sum_{l=0}^{m+j} \frac{n^{l-j} x^{l}}{l! (l-j)! (m+j-l)!}
\\
	& = &
	\sum_{l=0}^{\infty}  \frac{x^{l}p^{l-j}}{l! (l-j)!}
	\sum_{m=l-j}^{\infty} \frac{ \left(\frac{p}{n}\right)^{m+j-l}}{(m+j-l)!}
\\
	& = &
	e^{\frac{p}{n}} g_{j,p} (x) .
\end{eqnarray*}	
\end{proof}
\begin{remark}
For $p \not= 0$ the eigenfunctions can be written in terms of modified Bessel functions of the first kind,
i.e.,
\begin{equation}
	g_{j,p} (x)  = (px)^{\frac{j}{2}} I_j (2 \sqrt{px}) .
\end{equation}
\end{remark}

\section{Eigenstructure of $\mathcal{D}_{j}^{2}$}

The Bessel differential equation is given by 
\begin{equation}
z^{2}y^{\prime \prime }+zy^{\prime }+\left( z^{2}-\nu ^{2}\right) y=0.
\label{ODE-Bessel}
\end{equation}%
Solutions are the Bessel functions of the first kind $J_{\nu }\left(
z\right) $ and of the second kind $Y_{\nu }\left( z\right) $ (also called
Weber's function). Each is a holomorphic function of $z$ throughout the $z$%
-plane cut along the negative real axis. $J_{\nu }\left( z\right) $ and $%
J_{-\nu }\left( z\right) $ are linearly independent except when $\nu $ is an
integer. In our case we consider the integer $\nu =j$. Therefore, we take as
a fundamental system the functions $J_{\nu }\left( z\right) $ and $Y_{\nu
}\left( z\right) $ which are linearly independent for all values of $\nu $.
For integer $\nu $, $J_{\nu }$ is an entire function with series expansion 
\begin{equation*}
J_{\nu }\left( z\right) =\left( -1\right) ^{\nu }J_{-\nu }\left( z\right)
=\left( \frac{z}{2}\right) ^{\nu }\sum_{k=0}^{\infty }\frac{\left( -\frac{1}{%
4}z^{2}\right) ^{k}}{k!\Gamma \left( \nu +k+1\right) }\text{ \qquad }\left(
z\in C,\text{ }\nu =0,1,2,\ldots \right) .
\end{equation*}%
$Y_{\nu }\left( z\right) $ has a more complicated representation.

We show that the solutions of (\ref{ODE-Bessel}) are closely connected to
the eigenfunctions of the differential operator $\mathcal{D}%
_{j}^{2}=(1-j)D+e_{1}D^{2}$ as defined in (\ref{eq.X2}). Since the second
order linear ODE $\mathcal{D}_{j}^{2}y=0$ has the both independent solutions 
$e_{0},e_{j}$, the kernel of $\mathcal{D}_{j}^{2}$ is the span of $\left\{
e_{0},e_{j}\right\} $. The case $p\neq 0$ will be considered in the next
theorem. 

\begin{theorem}
\label{theorem-Bessel-ODE-EF-Dj2}Let $j\in \mathbb{Z}$ and $p\in \mathbb{%
R\smallsetminus }\left\{ 0\right\} $. If $y\left( x\right) $ is a solution
of the Bessel differential equation 
\begin{equation}
x^{2}y^{\prime \prime }+xy^{\prime }+\left( x^{2}-j^{2}\right) y=0
\label{ODE-Bessel-with-j}
\end{equation}%
on $\left( 0,\infty \right) $, then the function $h\left( x\right) =g\left(
px\right) $, where 
\begin{equation*}
g\left( x\right) =x^{j/2}y\left( 2i\sqrt{x}\right)
\end{equation*}%
is an eigenfunction of $\mathcal{D}_{j}^{2}$ to the eigenvalue $p$.
\end{theorem}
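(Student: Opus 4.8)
The plan is to split the statement into a trivial scaling reduction and a single chain-rule identity. First I would reduce to the normalized case $p=1$: if I can show that $g$ itself satisfies $\mathcal{D}_j^2 g = g$, then for $h(x)=g(px)$ one has $h'(x)=p\,g'(px)$ and $h''(x)=p^2 g''(px)$, so that
\begin{equation*}
(\mathcal{D}_j^2 h)(x) = (1-j)p\,g'(px) + x\,p^2 g''(px) = p\big[(1-j)g'(px) + (px)g''(px)\big] = p\,(\mathcal{D}_j^2 g)(px) = p\,g(px) = p\,h(x).
\end{equation*}
Thus everything reduces to proving that $g(x)=x^{j/2}y(2i\sqrt{x})$ is an eigenfunction of $\mathcal{D}_j^2$ for the eigenvalue $1$.

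Next I would verify $\mathcal{D}_j^2 g = g$ directly. Writing $z=2i\sqrt{x}$, so that $dz/dx = i\,x^{-1/2}$ and $z^2=-4x$, I would differentiate $g(x)=x^{j/2}y(z)$ twice, carefully tracking the half-integer powers of $x$ and the factors of $i$, and then assemble $\mathcal{D}_j^2 g=(1-j)g'+x\,g''$. Collecting the coefficients of $y(z)$, $y'(z)$ and $y''(z)$ gives
\begin{equation*}
(\mathcal{D}_j^2 g)(x) = -\tfrac{j^2}{4}\,x^{j/2-1}y(z) + \tfrac{i}{2}\,x^{(j-1)/2}y'(z) - x^{j/2}y''(z).
\end{equation*}
Imposing $\mathcal{D}_j^2 g = g = x^{j/2}y(z)$ and dividing through by $x^{j/2-1}$ turns the desired identity into the requirement $-x\,y''(z) + \tfrac{i}{2}x^{1/2}y'(z) - \big(\tfrac{j^2}{4}+x\big)y(z)=0$.

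Finally I would use the substitutions $x=-z^2/4$ and $x^{1/2}=z/(2i)$ that follow from $z=2i\sqrt{x}$; multiplying by $4$, the previous equation becomes
\begin{equation*}
z^2 y''(z) + z\,y'(z) + (z^2 - j^2)\,y(z) = 0,
\end{equation*}
which is exactly the Bessel equation (\ref{ODE-Bessel-with-j}) evaluated at the argument $z$, hence satisfied by $y$ by hypothesis. This establishes $\mathcal{D}_j^2 g = g$, and combined with the scaling step it yields $\mathcal{D}_j^2 h = p\,h$. I expect the only real obstacle to be bookkeeping: accurately carrying the non-integer exponents of $x$ and the powers of $i$ through the two differentiations, since a single sign slip in the $y'$ or $y''$ coefficient would block the collapse to Bessel form. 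Once the coefficients are correctly assembled, recognizing the Bessel operator after the substitution $z^2=-4x$ is immediate.
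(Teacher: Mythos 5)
Your proposal is correct and follows essentially the same route as the paper: a direct chain-rule computation of $g'$ and $g''$ for $g(x)=x^{j/2}y(2i\sqrt{x})$, combined with the Bessel equation and the substitution $z^2=-4x$; your coefficients $-\tfrac{j^2}{4}x^{j/2-1}$, $\tfrac{i}{2}x^{(j-1)/2}$, $-x^{j/2}$ check out. The only (harmless) organizational difference is that you first reduce to $p=1$ via the scaling identity $(\mathcal{D}_j^2 h)(x)=p(\mathcal{D}_j^2 g)(px)$ and then recognize the Bessel operator at the end, whereas the paper substitutes the ODE for $y''$ early and carries the parameter $p$ through the whole computation.
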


\begin{remark}
 If we take $y\left( x\right) =J_{j}\left( x\right) $
with $j,x\geq 0$, we have 
\begin{eqnarray*}
	h\left( x\right) 
	= 
	\left( px\right) ^{j/2}J_{j}\left( 2i\sqrt{px}\right)
	& = &\left( px\right) ^{j/2}\left( i\sqrt{px}\right) ^{j}\sum_{k=0}^{\infty }
	\frac{\left( px\right) ^{k}}{k!\Gamma \left( j+k+1\right) }
\\
	& = &i^{j}\left(px\right) ^{j}\sum_{k=0}^{\infty }
	\frac{\left( px\right) ^{k}}{k!\Gamma \left( j+k+1\right) }.
\end{eqnarray*}%
For $p\geq 0$, we essentially have the modified Bessel function $I_{j}$. To
avoid this, we can replace $p$ with $-p$. For negative integers $j$ we can
take advantage of the relation $J_{j}\left( x\right) =\left( -1\right)
^{j}J_{-j}\left( x\right) $.
\end{remark}

\begin{proof}[Proof of Theorem~\protect\ref{theorem-Bessel-ODE-EF-Dj2}]
Direct calculation yields 
\begin{eqnarray*}
g^{\prime }\left( x\right) &=&\frac{j}{2}x^{j/2-1}y\left( 2i\sqrt{x}\right)
+ix^{\left( j-1\right) /2}y^{\prime }\left( 2i\sqrt{x}\right) , \\
g^{\prime \prime }\left( x\right) &=&\frac{j}{2}\frac{j-2}{2}%
x^{j/2-2}y\left( 2i\sqrt{x}\right) +\frac{j}{2}x^{j/2-1}ix^{-1/2}y^{\prime
}\left( 2i\sqrt{x}\right) \\
&&+i\frac{j-1}{2}x^{\left( j-3\right) /2}y^{\prime }\left( 2i\sqrt{x}\right)
+i^{2}x^{\left( j-2\right) /2}y^{\prime \prime }\left( 2i\sqrt{x}\right) \\
&=&\frac{j}{2}\frac{j-2}{2}x^{j/2-2}y\left( 2i\sqrt{x}\right) +i\frac{2j-1}{2%
}x^{\left( j-3\right) /2}y^{\prime }\left( 2i\sqrt{x}\right) -x^{\left(
j-2\right) /2}y^{\prime \prime }\left( 2i\sqrt{x}\right) .
\end{eqnarray*}%
By the Bessel differential equation (\ref{ODE-Bessel-with-j}), we have 
\begin{equation*}
y^{\prime \prime }\left( 2i\sqrt{x}\right) =-\frac{1}{2i\sqrt{x}}y^{\prime
}\left( 2i\sqrt{x}\right) -\left( 1+\frac{j^{2}}{4x}\right) y\left( 2i\sqrt{x%
}\right)
\end{equation*}%
and it follows that 
\begin{eqnarray*}
g^{\prime \prime }\left( x\right) &=&\frac{j^{2}-2j}{4}x^{j/2-2}y\left( 2i%
\sqrt{x}\right) +i\frac{2j-1}{2}x^{\left( j-3\right) /2}y^{\prime }\left( 2i%
\sqrt{x}\right) \\
&&-\frac{i}{2}x^{\left( j-3\right) /2}y^{\prime }\left( 2i\sqrt{x}\right)
+\left( x^{j/2-1}+\frac{j^{2}}{4}x^{j/2-2}\right) y\left( 2i\sqrt{x}\right)
\\
&=&\left( x^{j/2-1}+\frac{j^{2}-j}{2}x^{j/2-2}\right) y\left( 2i\sqrt{x}%
\right) +i\left( j-1\right) x^{\left( j-3\right) /2}y^{\prime }\left( 2i%
\sqrt{x}\right) .
\end{eqnarray*}%
Hence, 
\begin{eqnarray*}
&&p^{-1}\left( p^{2}xg^{\prime \prime }\left( px\right) +\left( 1-j\right)
pg^{\prime }\left( px\right) \right) \\
&=&\left( \left( px\right) ^{j/2}+\frac{j^{2}-j}{2}\left( px\right)
^{j/2-1}\right) y\left( 2i\sqrt{px}\right) +i\left( j-1\right) \left(
px\right) ^{\left( j-1\right) /2}y^{\prime }\left( 2i\sqrt{px}\right) \\
&&+\frac{j\left( 1-j\right) }{2}\left( px\right) ^{j/2-1}y\left( 2i\sqrt{px}%
\right) +i\left( 1-j\right) \left( px\right) ^{\left( j-1\right)
/2}y^{\prime }\left( 2i\sqrt{px}\right) \\
&=&\left( \left( px\right) ^{j/2}+\frac{j^{2}-j}{2}\left( px\right)
^{j/2-1}\right) y\left( 2i\sqrt{px}\right) +\frac{j\left( 1-j\right) }{2}%
\left( px\right) ^{j/2-1}y\left( 2i\sqrt{px}\right) \\
&=&\left( px\right) ^{j/2}y\left( 2i\sqrt{px}\right) =g\left( px\right) ,
\end{eqnarray*}%
i.e., the function $h\left( x\right) =g\left( px\right) $ is a solution of
the ODE\ 
\begin{equation*}
xh^{\prime \prime }+\left( 1-j\right) h^{\prime }=ph.
\end{equation*}%
Noting definition (\ref{eq.X2}) concludes the proof.
\end{proof}

The next theorem shows a connection between eigenfunctions of $S_{n,j}$ and
eigenfunctions of $\mathcal{D}_{j}^{2}$.

\begin{theorem}
Let $j\in \mathbb{Z}$ and $p\in \mathbb{R}$. If a function $h\in C^{2}\left(
0,\infty \right) $ is an eigenfunction of $S_{n,j}$ to the eigenvalue $%
e^{p/n}$ for infinitely many integers $n$, then $h$ is an eigenfunction of $%
\mathcal{D}_{j}^{2}$ to the eigenvalue $p$.
\end{theorem}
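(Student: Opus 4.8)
The plan is to exploit the infinitesimal (Voronovskaja-type) relation that links $S_{n,j}$ to its generator $\mathcal{D}_{j}^{2}$. The hypothesis gives $S_{n,j}h=e^{p/n}h$ for infinitely many integers $n$, so for each fixed $x\in(0,\infty)$ we have, along this subsequence,
\[
n\bigl((S_{n,j}h)(x)-h(x)\bigr)=n\bigl(e^{p/n}-1\bigr)h(x)\longrightarrow p\,h(x),
\]
since $n(e^{p/n}-1)\to p$. If the left-hand side can be shown to converge to $(\mathcal{D}_{j}^{2}h)(x)=x\,h''(x)+(1-j)h'(x)$, then comparing the two limits forces $\mathcal{D}_{j}^{2}h=p\,h$ on $(0,\infty)$, which is exactly the assertion. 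Note that being an eigenfunction already presupposes $h\in E_{A}$ for some $A$, so the growth bound $|h(t)|\le Ke^{At}$ is available.

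The substance of the proof is therefore the pointwise asymptotic $\lim_{n\to\infty}n\bigl((S_{n,j}h)(x)-h(x)\bigr)=(\mathcal{D}_{j}^{2}h)(x)$ for $h\in C^{2}(0,\infty)\cap E$. First I would record the two relevant moment limits. Using $S_{n,j}e_{0}=e_{0}$ together with \eqref{eq-int1} and the Poisson moments $\sum_{k}s_{n,k}(x)=1$, $\sum_{k}k\,s_{n,k}(x)=nx$, $\sum_{k}k^{2}s_{n,k}(x)=nx+n^{2}x^{2}$, a direct computation (in which the finitely many correction terms $\sum_{k=0}^{j-1}(\cdots)s_{n,k}(x)$ decay exponentially in $n$) yields
\[
n\,S_{n,j}\bigl((e_{1}-xe_{0});x\bigr)\longrightarrow 1-j,
\qquad
n\,S_{n,j}\bigl((e_{1}-xe_{0})^{2};x\bigr)\longrightarrow 2x.
\]
Writing the Taylor expansion $h(t)=h(x)+h'(x)(t-x)+\tfrac12 h''(x)(t-x)^{2}+\omega(t,x)(t-x)^{2}$ with $\omega(t,x)\to0$ as $t\to x$, and applying $S_{n,j}$ while using $S_{n,j}e_{0}=e_{0}$, these two limits produce precisely $(1-j)h'(x)+x\,h''(x)=(\mathcal{D}_{j}^{2}h)(x)$. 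The same computation works verbatim for $j\le 0$ through the corresponding representation of $S_{n,j}$; the case distinction only affects the exponentially small correction terms, and the generator $\mathcal{D}_{j}^{2}=xD^{2}+(1-j)D$ comes out uniformly.

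The main obstacle is controlling the remainder $n\,S_{n,j}\bigl(\omega(\cdot,x)(\cdot-x)^{2};x\bigr)$, because $S_{n,j}$ integrates over the whole half-line whereas $h$ is only locally $C^{2}$ and globally of exponential growth. I would handle this in the standard way: split the defining sum/integral into a fixed neighbourhood $|t-x|<\delta$, where $|\omega(t,x)|$ is uniformly small and the contribution is dominated by the bounded quantity $n\,S_{n,j}((e_{1}-xe_{0})^{2};x)$, and a tail $|t-x|\ge\delta$, where one uses the fourth central moment estimate $S_{n,j}((e_{1}-xe_{0})^{4};x)=O(n^{-2})$ together with the growth bound $|h(t)|\le Ke^{At}$ and a Cauchy--Schwarz argument to show the tail is $o(1/n)$. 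Once the rescaled remainder is shown to vanish, the Voronovskaja relation holds and the comparison of limits from the first paragraph completes the proof; if preferred, this asymptotic may instead be invoked directly from the asymptotic analysis of \cite{Snj2}.
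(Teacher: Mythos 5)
Your argument is correct and follows essentially the same route as the paper: both proofs compare the Voronovskaja expansion $(S_{n,j}h)(x)=h(x)+n^{-1}(\mathcal{D}_{j}^{2}h)(x)+o(1/n)$ with the eigenvalue expansion $e^{p/n}h(x)=(1+\tfrac{p}{n}+o(\tfrac{1}{n}))h(x)$ along the infinite subsequence of admissible $n$. The only difference is that the paper simply invokes the Voronovskaja relation (available from the asymptotic results of \cite{Snj2}), whereas you additionally sketch its proof via the central moment limits $n\,S_{n,j}(e_{1}-xe_{0};x)\to 1-j$ and $n\,S_{n,j}((e_{1}-xe_{0})^{2};x)\to 2x$; that sketch is sound, and your moment computations match the operator $\mathcal{D}_{j}^{2}=(1-j)D+e_{1}D^{2}$.
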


\begin{proof}
Since $h\in C^{2}\left( 0,\infty \right) $, we have for each $x>0$ the Voronovskaja
relation 
\begin{equation*}
\left( S_{n,j}h\right) \left( x\right) =h\left( x\right) +n^{-1}\left( 
\mathcal{D}_{j}^{2}h\right) \left( x\right) +o\left( 1/n\right) \text{
\qquad }\left( n\rightarrow \infty \right) .
\end{equation*}%
On the other hand, we have 
\begin{equation*}
\left( S_{n,j}h\right) \left( x\right) =e^{p/n}h\left( x\right) =\left( 1+%
\frac{p}{n}+o\left( \frac{1}{n}\right) \right) h\left( x\right) \text{ \qquad }%
\left( n\rightarrow \infty \right) .
\end{equation*}%
Here $n\rightarrow \infty $ means that we take only such $n$ for which $%
S_{n,j}h=e^{p/n}h$. Comparing both relations we infer that 
\begin{equation*}
\left( \mathcal{D}_{j}^{2}h\right) \left( x\right) =ph\left( x\right)
+o\left( 1\right) \text{ \qquad }\left( n\rightarrow \infty \right) ,
\end{equation*}%
which completes the proof.
\end{proof}

The converse problem is more complicated. It is easy to show the following
result.

\begin{theorem}
If a function $h\in C^{2}\left( 0,\infty \right) \cap E$ is an eigenfunction
of $\mathcal{D}_{j}^{2}$ to the eigenvalue $p$ on $\left( 0,\infty \right) $%
, it follows, for each $x>0$ and for each constant $c>0$, the asymptotic
relation 
\begin{equation*}
\left( S_{n,j}h\right) \left( x\right) =e^{p/n}h\left( x\right) +o\left(
n^{-c}\right) \text{ \qquad }\left( n\rightarrow \infty \right) .
\end{equation*}
\end{theorem}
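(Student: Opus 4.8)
The plan is to deduce the statement from a complete (higher order) Voronovskaja-type expansion of $S_{n,j}$ combined with the eigenrelation $\mathcal{D}_{j}^{2}h=ph$. The conceptual picture guiding this is the following. By Theorem~\ref{th-composite} the family satisfies $S_{m,j}\circ S_{n,j}=S_{\frac{mn}{m+n},j}$, and in the reparametrisation $t=1/n$ this reads $S_{1/s,j}\circ S_{1/t,j}=S_{1/(s+t),j}$, so $(S_{1/t,j})_{t>0}$ is a one-parameter semigroup whose infinitesimal generator is, by the first order Voronovskaja relation already used in the preceding theorem, exactly $\mathcal{D}_{j}^{2}$. For an eigenfunction of the generator one morally expects $S_{1/t,j}h=e^{pt}h$, that is $S_{n,j}h=e^{p/n}h$ identically; the weaker asymptotic claim is precisely what can be obtained by elementary means, without invoking the full machinery of strongly continuous semigroups.

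Concretely, first I would establish, for fixed $x>0$ and every $q\in\mathbb{N}$, the expansion
\begin{equation*}
(S_{n,j}h)(x)=\sum_{k=0}^{q}\frac{1}{k!\,n^{k}}\bigl((\mathcal{D}_{j}^{2})^{k}h\bigr)(x)+o\!\left(n^{-q}\right)\qquad(n\to\infty).
\end{equation*}
This is nothing but the Taylor expansion of the semigroup orbit $t\mapsto S_{1/t,j}h$ at $t=0$, and it can be proved by the classical moment method: using (\ref{eq-int1}) one computes the images $S_{n,j}e_{m}$, expands $h$ by Taylor's formula around $x$, and the resulting coefficients reassemble into the iterated operators $\mathcal{D}_{j}^{2k}$. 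Here Proposition~\ref{iteratediff} is exactly what identifies the $k$-th term, since $(\mathcal{D}_{j}^{2})^{k}=\mathcal{D}_{j}^{2k}=e_{j}D^{k}(e_{k-j}D^{k})$, while the commutativity of $S_{n,j}$ with $\mathcal{D}_{j}^{2}$ from Theorem~\ref{lem-com-diff} guarantees that these are the natural terms to appear.

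Once this expansion is in hand, the conclusion is immediate. Since $h$ is an eigenfunction, $(\mathcal{D}_{j}^{2})^{k}h=p^{k}h$ for every $k$, whence
\begin{equation*}
\sum_{k=0}^{q}\frac{1}{k!\,n^{k}}\bigl((\mathcal{D}_{j}^{2})^{k}h\bigr)(x)=h(x)\sum_{k=0}^{q}\frac{(p/n)^{k}}{k!}=e^{p/n}h(x)-h(x)\sum_{k=q+1}^{\infty}\frac{(p/n)^{k}}{k!},
\end{equation*}
and the tail is $O\!\left(n^{-(q+1)}\right)$. Therefore $(S_{n,j}h)(x)=e^{p/n}h(x)+o(n^{-q})$. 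Given any $c>0$, I would simply choose an integer $q>c$, so that $o(n^{-q})=o(n^{-c})$, which is the assertion.

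The hard part is the rigorous justification of the remainder $o(n^{-q})$ in the expansion, uniformly enough at the fixed point $x$, for functions of exponential growth: the tails of the integrals defining $S_{n,j}$ must be controlled by the hypothesis $h\in E$, and the Taylor remainder must be shown to contribute only at order $o(n^{-q})$. Two observations ease this. First, although only $h\in C^{2}(0,\infty)$ is assumed, $h$ solves the linear ODE $xh^{\prime\prime}+(1-j)h^{\prime}=ph$ with real-analytic coefficients on $(0,\infty)$, so $h$ is automatically $C^{\infty}$ there and all required derivatives exist. Second, the quantity governing the remainder at step $q$ is $(\mathcal{D}_{j}^{2})^{q+1}h=p^{q+1}h$, which again lies in $E$; thus the relevant smoothness is automatically under control, and no hypothesis beyond $h\in C^{2}(0,\infty)\cap E$ is needed.
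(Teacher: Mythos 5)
Your argument is essentially the paper's: the paper likewise combines the higher-order asymptotic expansion $(S_{n,j}h)(x)=h(x)+\sum_{k=1}^{q}\frac{1}{k!n^{k}}(\mathcal{D}_{j}^{2k}h)(x)+o(n^{-q})$ with Proposition~\ref{iteratediff} and the eigenrelation $(\mathcal{D}_{j}^{2})^{k}h=p^{k}h$, sums the truncated exponential series, and picks $q>c$. The one step you flag as ``the hard part'' --- justifying that expansion for $h\in E$ --- is not proved in the paper either but is quoted as an already established result (Theorem~25 of \cite{Snj2}), so your proposal matches the intended proof, with the expansion available as a citation rather than something to be re-derived by the moment method.
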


\begin{proof}
The fact, that $h$ is an eigenfunction of $\mathcal{D}_{j}^{2}$ implies that 
$h$ is infinitely often differentiable. Let $q$ be a positive integer. By \cite[Theorem 25]{Snj2}
 the operators $S_{n,j}$ possess the asymptotic
expansion 
\begin{equation*}
\left( S_{n,j}h\right) \left( x\right) =h\left( x\right) +\sum_{k=1}^{q}%
\frac{1}{k!n^{k}}\left( {\mathcal{D}}_{j}^{2k}h\right) \left( x\right)
+o\left( n^{-q}\right) \text{ \qquad }\left( n\rightarrow \infty \right) .
\end{equation*}%
Observing that ${\mathcal{D}}_{j}^{2k}h= \left ({\mathcal{D}}_{j} \right )^k h =p^{k}h$ (see Proposition \ref{iteratediff}) and 
\begin{equation*}
\sum_{k=0}^{q}\frac{p^{k}}{k!n^{k}}=e^{p/n}+O\left( n^{-q-1}\right) \text{
\qquad }\left( n\rightarrow \infty \right)
\end{equation*}%
completes the proof.
\end{proof}


\begin{thebibliography}{99}
\bibitem{Snj2} U.~Abel, A.~M. Acu, M.~Heilmann, I.~Ra\c{s}a,
Asymptotic properties for a general class of
Sz\'asz-Mirakjan-Durrmeyer operators,  	arXiv:2407.16474.

\bibitem{AbIv2005} U.~Abel, M.~Ivan, Enhanced asymptotic approximation and
approximation of truncated functions by linear operators, Constructive
Theory of Functions, Proc. Int. Conf. CTF, Varna, June 2 - June 6, 2005,
B.~D.~Bojanov (Ed.), Prof. Marin Drinov Academic Publishing House, 2006,
1--10.


\bibitem{Abramowitz} M.~Abramowitz, I.~A.~Stegun, (Eds.): Handbook of
Mathematical Functions with Formulas, Graphs and Mathematical Tables,
National Bureau of Standards Applied Mathematics Series 55, Issued June
1964, Tenth Printing, December 1972, with corrections.



\bibitem{He1988} M.~Heilmann, Commutativity of operators from
Baskakov-Durrmeyer type, Constructive theory of functions (Varna, 1987),
Publ. House Bulgar. Acad. Sci., Sofia, pp. 197--206, 1988.

\bibitem{Hei1989} M.~Heilmann, Direct and converse results for operators of
Baskakov-Durrmeyer type, Approximation Theory Appl. 5 (1):105--127 (1989).

\bibitem{He1992} M.~Heilmann, Erh\"{o}hung der Konvergenzgeschwindigkeit bei
der Approximation von Funktionen mit Hilfe von Linearkombinationen
spezieller positiver linearer Operatoren. Habilitationschrift Universit\"{a}%
t Dortmund, 1991.

\bibitem{Hei2015} M.~Heilmann, Commutativity and spectral properties of
genuine Baskakov-Durrmeyer type operators and their $k$th order Kantorovich
modification, J. Numer. Anal. Approx. Theory, 44, No.2, 166--179 (2015).

\bibitem{HeiMue1989} M.~Heilmann, M. W. M\"uller, On simultaneous
approximation by the method of Baskakov-Durrmeyer operators, Numer. Funct.
Anal. Optim. 10 (1989), 127--138.

\bibitem{Heilmann-Tachev-Philipps-2011} M.~Heilmann, G.~Tachev,
Commutativity, direct and strong converse results for Philipps operators,
East J. Approx. 17 (3), 299--317 (2011).

\bibitem{Heilmann-Tachev-Phillips-2013} M.~Heilmann, G.~Tachev, Linear
combinations of genuine Sz\'{a}sz-Mirakjan-Durrmeyer operators, Advances in
Applied Mathematics and Approximation - Contributions from AMAT 2012, Band
41, Seite 85-106, Applied Mathematics and Approximation Theory 2012, Ankara,
Turkey. May 2012, In Anastassiou, George A. and Duman, Oktay, Editor,
Herausgeber: Springer Proceedings in Mathematics and Statistics, 2013


\bibitem{MaTo1985} S.~M.~Mazhar, V.~Totik, Approximation by modified Sz\'asz
operators, {Acta Sci. Math.} 49, 257--269 (1985).

\bibitem{Ph1954} R.~S.~Phillips, An inversion formula for Laplace transforms
and semi-groups of linear operators, Ann. Math. (2) 59 (1954), 325--356.

\bibitem{Sw1} T.~\'Swiderski, On certain properties of the combinations of
Sz\'asz-Durrmeyer operators, Anal. Theory Appl. Vol. 27(2) (2011), 167--180.
\end{thebibliography}
\end{document}